\documentclass[preprint,12pt]{elsarticle}

\usepackage[T1]{fontenc}
\usepackage[utf8]{inputenc}
\usepackage{amsmath,amssymb,amsthm,mathtools}
\usepackage{microtype}
\usepackage{graphicx}
\usepackage{booktabs}
\usepackage{hyperref}
\hypersetup{hidelinks}

\journal{Applied and Computational Harmonic Analysis}

\newtheorem{theorem}{Theorem}[section]
\newtheorem{corollary}[theorem]{Corollary}
\newtheorem{proposition}[theorem]{Proposition}
\newtheorem{lemma}[theorem]{Lemma}
\newtheorem{remark}[theorem]{Remark}

\DeclareMathOperator{\rank}{rank}
\DeclareMathOperator{\dist}{dist}
\newcommand{\R}{\mathbb{R}}
\newcommand{\C}{\mathbb{C}}
\newcommand{\F}{\mathbb{F}}
\newcommand{\PP}{\mathbb{P}}
\newcommand{\EE}{\mathbb{E}}
\newcommand{\cS}{\mathcal{S}}

\begin{document}

\begin{frontmatter}

\title{Extreme least singular values of Gaussian row submatrices and a phase retrieval stability problem}

\author[huji]{Yitzchak Shmalo}
\ead{yitzchak.shmalo@gmail.com}
\address[huji]{Einstein Institute of Mathematics, The Hebrew University of Jerusalem, Givat Ram, Jerusalem, Israel}

\begin{abstract}
Let $\F$ be either $\R$ or $\C$, and put $d_\F=\dim_\R\F$. Let $A_m\in\F^{N_m\times m}$ have independent
standard Gaussian entries and assume $N_m/m\to\gamma>1$. We determine the exponential scale of the
smallest least singular value among all square row submatrices:
\[
  M_m^\F=\min_{T\subset [N_m],\, |T|=m}\sigma_{\min}(A_{m,T}).
\]
With $c_\gamma=\gamma^\gamma/(\gamma-1)^{\gamma-1}$ we prove
$\tfrac1m\log M_m^\F\xrightarrow{P}-d_\F^{-1}\log c_\gamma$, and when $N_m=\gamma m+O(1)$ the convergence
holds at rate $O(m^{-1})$. As an application, for real
phase retrieval at the critical number $N=2m-1$ of measurements the Balan--Wang stability parameter
satisfies $\omega(A_m)=4^{-m+o_P(m)}$, so the sharp exponential base in the Gaussian Balan--Wang problem
is $1/4$. The lower estimate extends to all bounded--density ensembles, and continuity is necessary.
\end{abstract}

\begin{keyword}
phase retrieval \sep random matrices \sep Gaussian matrices \sep least singular value \sep hard edge \sep complement property \sep second moment method
\MSC[2020] 60B20 \sep 42C15 \sep 94A12 \sep 15B52 \sep 15A18
\end{keyword}

\end{frontmatter}

\section{Introduction}

The least singular value of a random matrix is a basic hard-edge observable in random matrix theory.
This paper studies a related extremal quantity: the minimum of the least singular value over all
square row submatrices of a tall Gaussian matrix. The number of such submatrices is exponential in
the dimension, and the main point is to show that their strong dependence does not change the
exponential scale predicted by the one-matrix hard-edge small-ball exponent.

Let $\F\in\{\R,\C\}$ and let $d_\F=\dim_\R\F$, so that $d_\R=1$ and $d_\C=2$. A standard Gaussian
variable over $\R$ is $N(0,1)$; a standard Gaussian variable over $\C$ is $(\xi+i\eta)/\sqrt2$, where
$\xi,\eta$ are independent $N(0,1)$ variables. For $A\in\F^{N\times m}$ and $T\subset[N]$, $A_T$
denotes the submatrix formed from the rows indexed by $T$. Define
\begin{equation}\label{eq:def-M}
  M_m^\F(A)=\min_{T\subset[N],\ |T|=m}\sigma_{\min}(A_T).
\end{equation}

Our main result identifies the limiting exponential base for \eqref{eq:def-M} for every fixed row
aspect ratio $\gamma>1$.

\begin{theorem}[Extremal square submatrices]\label{thm:general}
Let $\F\in\{\R,\C\}$ and $d_\F=\dim_\R\F$. Let $A_m\in\F^{N_m\times m}$ have independent standard
Gaussian entries, and assume $N_m/m\to\gamma>1$. Put
\begin{equation}\label{eq:hgamma}
  h(\gamma)=\gamma\log\gamma-(\gamma-1)\log(\gamma-1),
  \qquad c_\gamma=e^{h(\gamma)}=\frac{\gamma^\gamma}{(\gamma-1)^{\gamma-1}}.
\end{equation}
Then
\begin{equation}\label{eq:main-limit}
  \frac{1}{m}\log M_m^\F(A_m) \xrightarrow{P} -\frac{h(\gamma)}{d_\F}.
\end{equation}
Equivalently,
\begin{equation}\label{eq:base}
  M_m^\F(A_m)=c_\gamma^{-m/d_\F+o_P(m)}.
\end{equation}
If in addition $N_m=\gamma m+O(1)$, for instance $N_m=\lfloor\gamma m\rfloor$, then for every fixed
$\varepsilon>0$,
\begin{equation}\label{eq:rate}
  \PP\left\{\left|\frac{1}{m}\log M_m^\F(A_m)+\frac{h(\gamma)}{d_\F}\right|>\varepsilon\right\}
  =O_{\gamma,\varepsilon,\F}(m^{-1}).
\end{equation}
\end{theorem}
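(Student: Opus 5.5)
The plan is to prove matching upper and lower bounds on $\frac1m\log M_m^\F$. Both rest on the one-matrix hard-edge small-ball estimate for an $m\times m$ Gaussian matrix $G$ over $\F$. For $\varepsilon\in(0,1)$ we have $\PP\{\sigma_{\min}(G)\le \varepsilon m^{-1/2}\}\asymp \varepsilon^{d_\F}$, uniformly in $m$. The lower tail of this estimate follows from Edelman/Szarek–type density bounds, and over $\C$ the exponent doubles because the relevant smallest singular value has density vanishing like $s^{2d_\F-1}$ near $0$ after scaling. The combinatorial input is $\binom{N_m}{m}=e^{m h(\gamma)+O(\log m)}$ when $N_m/m\to\gamma$, with the error $O(\log m)$ (respectively $o(m)$ in general). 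Put $t=e^{-m(h(\gamma)\pm\delta)/d_\F}$.

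\emph{Lower bound (no submatrix is too singular).} Take $t=e^{-m(h+\delta)/d_\F}$ and apply a union bound over all $T$:
\[
\PP\{M_m^\F\le t\}\le \binom{N_m}{m}\sup_T\PP\{\sigma_{\min}(A_T)\le t\}\le e^{mh+o(m)}\,(C\sqrt m\, t)^{d_\F}=e^{-m\delta+o(m)}.
\]
This goes to zero, and exponentially fast when $N_m=\gamma m+O(1)$. The per-matrix bound needs only that the rows of $A_T$ are independent with bounded densities, via the standard estimate $\sigma_{\min}(B)\ge m^{-1/2}\min_i\dist(B_i,H_i)$. Here $H_i$ is the span of the other rows, and the distance has a bounded density in dimension $d_\F$. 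This is why the lower estimate extends to bounded-density ensembles.

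\emph{Upper bound (some submatrix is very singular).} Take $t=e^{-m(h-\delta)/d_\F}$ and run a second moment argument on the count $Z=\sum_T \mathbf 1\{E_T\}$. Using $\sigma_{\min}$ directly is awkward, so I will use a more tractable event. Let $E_T$ be the event that the last row of $A_T$ (in a fixed ordering) lies within distance $t$ of the span of the other $m-1$ rows of $A_T$. On $E_T$ we have $\sigma_{\min}(A_T)\le t$. Moreover $\PP(E_T)\asymp t^{d_\F}$ exactly, because given the other rows the distance is the modulus of a standard Gaussian in $\F$. Hence
\[
\EE Z\asymp \binom{N_m}{m}t^{d_\F}=e^{m\delta+o(m)}\to\infty .
\]
It then suffices to show $\EE Z^2\le(1+o(1))(\EE Z)^2$, and Chebyshev or Paley–Zygmund gives $Z>0$ with high probability.

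For the second moment, split pairs $(T,T')$ by their overlap $k=|T\cap T'|$. When $k$ is small, the hyperplanes spanned by the $(m-1)$-row subsets are nearly independent. When $k$ is large, there are few such pairs. I need a bound of the form $\PP(E_T\cap E_{T'})\le C\,t^{2d_\F}\,\phi(k)$, where $\phi$ grows at most like $t^{-d_\F}$ once $k=m$. Its growth must be slow enough that $\sum_k \binom{m}{k}\binom{N-m}{m-k}\phi(k)\big/\binom Nm$ stays $1+o(1)$. This step is the main obstacle. I will first try to condition on the shared rows, use rotational invariance to reduce the geometry to the principal angles between the two hyperplanes, and bound the joint small-ball probability of two Gaussian distances through those angles. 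If the correlations are too strong near $k\approx m$, I will switch to counting only a well-separated subfamily of sets $T$ (for instance with overlaps at most $(1-\eta)m$). Such a family still has cardinality $e^{m(h-\eta')}$ for small $\eta$, which is enough since $\delta$ is arbitrary.

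For the rate \eqref{eq:rate}, the first moment side is exponentially small. On the second moment side, Chebyshev gives $\operatorname{Var}Z/(\EE Z)^2$. When $N_m=\gamma m+O(1)$ the binomial asymptotics are exact up to polynomial factors, and I expect the dominant variance contribution to come from pairs with $k=m-1$ or $k=1$. That contribution would give the $O(m^{-1})$ term.
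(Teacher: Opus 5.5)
Your lower bound is the paper's argument. Two small slips: the union over $i$ in $\sigma_{\min}\ge m^{-1/2}\min_i\dist(B_i,H_i)$ costs an extra factor $m$, which is harmless, and the density of $\sqrt m\,\sigma_{\min}$ vanishes like $s^{d_\F-1}$, not $s^{2d_\F-1}$.

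The genuine gap is the upper bound. The estimate $\EE Z^2\le(1+o(1))(\EE Z)^2$ is the whole content of that half of the theorem, and you leave it as an open obstacle. Your heuristic (small overlaps nearly independent, large overlaps rare) misses the pairs that actually matter. For $N=\gamma m$ a typical pair has overlap about $m/\gamma$, which is linear in $m$. For these pairs you must show the joint probability is $p_\F(t)^2(1+o(1))$ on average, with the sharp constant. A bound $C t^{2d_\F}\phi(k)$ with unspecified $C$ only gives $\PP\{Z>0\}\ge c$ via Paley--Zygmund, not probability tending to $1$. Passing to a well-separated subfamily only discards the rare high-overlap pairs, which turn out to be harmless anyway, so it does not address this.

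Your rate heuristic is also off. The $O(m^{-1})$ does not come from overlaps $m-1$ or $1$; both contribute $e^{-\Theta(m)}$. It comes from a $1+C/D$ excess spread over typical pairs with $D=\Theta(m)$.

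Three ingredients are missing.

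First, your own family is better than you realized. Take pairs $T,T'$ whose last rows $\ell<\ell'$ differ. Then $\ell'\notin T$, and conditionally on all rows other than $a_{\ell'}$, $\dist(a_{\ell'},H_{T'\setminus\{\ell'\}})$ has the law of $|g_\F|$. So $\PP(E_T\cap E_{T'})=p_\F(t)^2$ exactly. Only pairs testing a common row $x$ matter, which is why the paper fixes $x=a_N$ from the start and sums over $(m-1)$-subsets $S\subset[N-1]$.

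Second, take such a pair with $r$ shared hyperplane rows. Conditioning on those rows makes the two normals independent Haar unit vectors in the $D=m-r$ dimensional complement $E^\perp$, and $x$ projects to a standard Gaussian there. Conditionally on $\rho=\langle u,v\rangle$, the joint probability is at most $\frac{2t^2}{\pi}(1-\rho^2)^{-1/2}$ over $\R$ and $t^4(1-|\rho|^2)^{-1}$ over $\C$. Averaging over Haar $u,v$ gives $q_D^\F(t)\le(1+Ct^2)p_\F(t)^2(1+C/D)$ for $D\ge3$, and the crude bound $q_2^\F\le p_\F$ suffices for $D=2$.

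Third, Vandermonde's identity gives $\EE[1/D]=N/(m(N-m+1))=O(1/m)$. Also $\PP\{D=2\}=(m-1)(N-m)/\binom{N-1}{m-1}$, which is exponentially small relative to $\mu=\EE Z_t$. Together these give $\operatorname{Var}Z_t/\mu^2=O(m^{-1})$, which is the source of the rate.
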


The constant $c_\gamma$ is the exponential growth rate of the relevant binomial coefficient:
$\binom{N_m}{m}=\exp\{h(\gamma)m+o(m)\}$. For one $m\times m$ Gaussian matrix over $\F$, the
probability that $\sigma_{\min}$ is at most $t$ is of order $t^{d_\F}$ at the origin, up to powers of
$m$. If the square submatrices behaved independently, the minimum over $\exp\{h(\gamma)m+o(m)\}$ of
them would therefore be expected at scale $\exp\{-h(\gamma)m/d_\F+o(m)\}$. Theorem \ref{thm:general}
proves that this heuristic gives the correct exponential scale.

A closely related question was studied by Rademacher and Shu \cite{RademacherShu2022} in the smoothed
analysis of Frank--Wolfe methods. In our row notation, their results show that, when the number of
Gaussian rows is a fixed factor larger than the dimension, some square submatrix has exponentially
small least singular value, and all square submatrices have least singular value bounded below at a
possibly different exponential scale. The present paper refines that picture by identifying the exact
exponential rate, by treating both real and complex Gaussian matrices, and by extracting the sharp
Gaussian base in the Balan--Wang phase retrieval stability question.

The motivating application is the stability of real finite-dimensional phase retrieval; this is the
setting of Randomstrasse Open Problem 21. If $A\in\R^{N\times m}$ has rows $a_1,\ldots,a_N$, the real
phaseless measurement map is $x\bmod\{\pm1\}\mapsto |Ax|=(|\langle a_i,x\rangle|)_{i=1}^N$. The
finite-dimensional frame formulation goes back to Balan, Casazza and Edidin
\cite{BalanCasazzaEdidin2006}; see also \cite{BandeiraCahillMixonNelson2014,
ConcaEdidinHeringVinzant2015,GrohsKoppensteinerRathmair2020} for injectivity and stability background.
In the real case, injectivity is characterized by the complement property: for every partition of the
rows, one side must span $\R^m$. Thus the sharp generic threshold is $N=2m-1$.

Balan and Wang introduced quantitative stability parameters measuring the conditioning of spanning
subfamilies and related these parameters to Lipschitz bounds for phaseless inversion
\cite{BalanWang2015}; see also \cite{Balan2017,BalanZou2016,EldarMendelson2014}. At the real critical
threshold, for a full-spark $A\in\R^{(2m-1)\times m}$, define
\begin{equation}\label{eq:omega}
  \omega(A)=\min_{J\subset[2m-1]:\,\rank(A_J)<m}\sigma_m(A_{J^c}).
\end{equation}
Balan and Wang conjectured an exponential upper bound
\begin{equation}\label{eq:bw-bound}
  \omega(A)\le C\max_i\|a_i\|\, b^m
\end{equation}
with universal constants $C>0$ and $0<b<1$. Randomstrasse Open Problem 21 asks for the value of
$\omega(A)$ for iid Gaussian $A$ and for the corresponding value of the exponential base in
\eqref{eq:bw-bound} \cite{BandeiraRandomstrasse2025,BandeiraRandomstrasse2026}. Thus the real critical
case below is not just an example of Theorem \ref{thm:general}; it is the stated Gaussian Balan--Wang
open problem.

For full-spark $A\in\R^{(2m-1)\times m}$, \eqref{eq:omega} reduces to the minimum over square row
submatrices:
\begin{equation}\label{eq:omega-square}
  \omega(A)=\min_{T\subset[2m-1],\ |T|=m}\sigma_{\min}(A_T).
\end{equation}
Indeed, the rank-deficient sets $J$ are exactly those with $|J|\le m-1$, and adding rows can only
increase the Gram matrix. Applying Theorem \ref{thm:general} with $\F=\R$ and $N_m=2m-1$ gives the
following answer.

\begin{corollary}[Gaussian Balan--Wang base]\label{cor:bw}
Let $A_m\in\R^{(2m-1)\times m}$ have independent standard Gaussian entries. Then
\begin{equation}\label{eq:omega-limit}
  \frac{1}{m}\log\omega(A_m)\xrightarrow{P}-\log4,
\end{equation}
with the rate
\begin{equation}\label{eq:omega-rate}
  \PP\left\{\left|\frac{1}{m}\log\omega(A_m)+\log4\right|>\varepsilon\right\}=O_\varepsilon(m^{-1})
\end{equation}
for every fixed $\varepsilon>0$. Equivalently, $\omega(A_m)=4^{-m+o_P(m)}$.

Let $R_m=\max_{1\le i\le 2m-1}\|a_i\|$. For every $b>1/4$, $\PP\{\omega(A_m)\le R_m b^m\}\to1$. For
every $b<1/4$ and every fixed $C<\infty$, $\PP\{\omega(A_m)\le C R_m b^m\}\to0$. Thus the Gaussian
critical base in the Balan--Wang question is $1/4$.
\end{corollary}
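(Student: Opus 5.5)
The plan is to derive Corollary \ref{cor:bw} directly from Theorem \ref{thm:general} via the reduction \eqref{eq:omega-square}, and then handle the factor $R_m$ with a crude concentration bound.

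\emph{Full spark and the reduction.} With probability one a Gaussian $A_m$ is full spark: every $m\times m$ row submatrix has nonzero determinant, because the determinant is a nonzero polynomial in the entries. On this event, \eqref{eq:omega-square} identifies $\omega(A_m)$ with $M_m^\R(A_m)$ for $N_m=2m-1$.

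\emph{The exponent and the rate.} Take $\F=\R$ and $\gamma=2$. Then $N_m=2m-1=\gamma m+O(1)$, and
\[
h(2)=2\log 2-1\cdot\log 1=\log 4 .
\]
So \eqref{eq:main-limit} gives \eqref{eq:omega-limit}, and \eqref{eq:rate} gives \eqref{eq:omega-rate}. The restatement $\omega(A_m)=4^{-m+o_P(m)}$ is just \eqref{eq:base}.

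\emph{Controlling $R_m$.} We show $\frac1m\log R_m\to0$ in probability.
\begin{itemize}
\item Upper bound: $\|a_i\|^2\sim\chi^2_m$, and a standard Gaussian concentration bound (for instance Laurent--Massart) gives $\PP\{\|a_i\|>2\sqrt m\}\le e^{-cm}$. A union bound over the $2m-1$ rows yields $R_m\le 2\sqrt m$ with probability tending to one.
\item Lower bound: $R_m\ge\|a_1\|\ge\sqrt m/2$ with probability tending to one.
\end{itemize}
Hence $R_m=e^{o_P(m)}$.

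\emph{The two threshold statements.} First fix $b>1/4$ and choose $\varepsilon>0$ with $\log b>-\log4+2\varepsilon$. By \eqref{eq:omega-limit}, with probability tending to one we have $\omega(A_m)\le e^{m(-\log 4+\varepsilon)}$. On the event $R_m\ge\sqrt m/2\ge e^{-\varepsilon m}$ this gives
\[
\omega(A_m)\le R_m\,e^{m(-\log4+2\varepsilon)}\le R_m b^m .
\]

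Next fix $b<1/4$ and $C<\infty$, and choose $\varepsilon$ with $\log b<-\log 4-2\varepsilon$. With probability tending to one, $\omega(A_m)\ge e^{m(-\log4-\varepsilon)}$ and $CR_m\le 2C\sqrt m\le e^{\varepsilon m}$ for large $m$. Then
\[
CR_m b^m\le e^{m(-\log4-\varepsilon)}\le\omega(A_m).
\]
The inequality $\omega(A_m)\le CR_mb^m$ can then hold only on the tie event $\omega(A_m)=CR_mb^m$. To avoid dealing with ties, I would run the same argument with a slightly larger $b'\in(b,1/4)$, which makes the inequality strict. This shows the probability tends to zero, and so the critical base is $1/4$.

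All the real content is in Theorem \ref{thm:general}. The only point needing care is the almost-sure full-spark property that justifies \eqref{eq:omega-square}, and that property is standard.
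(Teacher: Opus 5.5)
Your proof is correct and follows the paper's argument. Almost-sure full spark gives the reduction to $M_m^\R(A_m)$, Theorem~\ref{thm:general} with $\gamma=2$ and $h(2)=\log 4$ gives the limit and the $O(m^{-1})$ rate, and $\frac1m\log R_m\to0$ in probability yields both threshold statements. The differences are cosmetic: you pin $R_m$ between $\sqrt m/2$ and $2\sqrt m$ via $\chi^2$ concentration where the paper uses $1\le R_m\le e^{\eta m}$, and the tie worry is unnecessary, because your strict choice $\log b<-\log4-2\varepsilon$ together with $R_m>0$ already gives $CR_mb^m<\omega(A_m)$.
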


The complex part of Theorem \ref{thm:general} is not a statement about the complex phase retrieval
threshold. It is the corresponding square-submatrix random matrix theorem over $\C$. For example, when
$N_m/m\to2$ one obtains $M_m^\C(A_m)=2^{-m+o_P(m)}$, reflecting the two-real-dimensional small-ball
exponent of a complex Gaussian scalar.

\section{One-matrix lower tails and binomial growth}

We first record the elementary one-matrix estimate needed for the lower bound. It is deliberately
weaker than the sharp hard-edge asymptotics of Edelman \cite{Edelman1988} and subsequent work such as
\cite{RudelsonVershynin2009,TaoVu2010}; the polynomial loss is harmless at the exponential scale
considered here.

\begin{lemma}[Polynomial least-singular-value tail]\label{lem:lsv-tail}
Let $G\in\F^{m\times m}$ have independent standard Gaussian entries, where $\F\in\{\R,\C\}$ and
$d_\F=\dim_\R\F$. There is a constant $C_\F$ such that, for all $m\ge1$ and all $t>0$,
\begin{equation}\label{eq:lsv-tail}
  \PP\{\sigma_{\min}(G)\le t\}\le C_\F m^{1+d_\F/2}t^{d_\F}.
\end{equation}
\end{lemma}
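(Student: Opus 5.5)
We prove \eqref{eq:lsv-tail} with the standard ``distance to a hyperplane'' reduction; the factor $m^{1+d_\F/2}$ leaves room for a crude union over columns.

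\textbf{Step 1 (column reduction).} Let $g_1,\dots,g_m$ be the columns of $G$, and let $H_k$ be the span of the other columns. If $\sigma_{\min}(G)\le t$, take a unit $x$ with $\|Gx\|\le t$ and choose $k$ with $|x_k|\ge m^{-1/2}$. Then
\[
  \dist(g_k,H_k)\le \frac{\|Gx\|}{|x_k|}\le \sqrt m\, t ,
\]
because $Gx=x_kg_k+\sum_{j\ne k}x_jg_j$ and the second sum lies in $H_k$. A union bound over $k$ gives
\[
  \PP\{\sigma_{\min}(G)\le t\}\le m\,\max_k\PP\{\dist(g_k,H_k)\le \sqrt m\,t\}.
\]

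\textbf{Step 2 (one-dimensional small ball).} Condition on the columns other than $g_k$. Almost surely $H_k$ has codimension one, so there is a unit normal vector $\nu$, measurable with respect to those columns and independent of $g_k$. Then
\[
  \dist(g_k,H_k)=|\langle g_k,\nu\rangle| .
\]
By rotation invariance (orthogonal over $\R$, unitary over $\C$), $\langle g_k,\nu\rangle$ is a standard Gaussian scalar over $\F$. Its density on $\F\cong\R^{d_\F}$ is bounded, by $(2\pi)^{-1/2}$ in the real case and $\pi^{-1}$ in the complex case. Hence
\[
  \PP\{|\langle g_k,\nu\rangle|\le s\}\le C s^{d_\F},
\]
since the ball of radius $s$ in $\R^{d_\F}$ has volume $O(s^{d_\F})$; for $\C$ the probability is exactly $1-e^{-s^2}\le s^2$.

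\textbf{Step 3 (combine).} Taking $s=\sqrt m\,t$ and integrating out the conditioning gives
\[
  \PP\{\sigma_{\min}(G)\le t\}\le m\cdot C(\sqrt m\,t)^{d_\F}=C_\F\, m^{1+d_\F/2}t^{d_\F}.
\]
This holds for all $t>0$: if the right side exceeds $1$ the bound is trivial. The case $m=1$ needs no separate treatment, since then $\sigma_{\min}(G)=|G_{11}|$ and Step 2 applies directly.

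The only step needing care is the measurability and independence of $\nu$ in Step 2. It is routine: $H_k$ is almost surely a hyperplane, $\nu$ can be chosen as a Borel function of the other columns, and independence of the columns makes the conditional law of $g_k$ still standard Gaussian.
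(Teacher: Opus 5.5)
Your proof is correct and is essentially the paper's argument. Both reduce $\sigma_{\min}(G)\le t$ to $\dist\le\sqrt m\,t$ for one of the $m$ rows or columns, take a union bound over the index, and use rotational invariance to reduce to a scalar Gaussian small-ball estimate. The one difference is in the reduction step: you get the $\sqrt m$ loss by pigeonholing a large coordinate of an approximate null vector (working with columns), while the paper derives it from the negative second moment identity $\sum_k\sigma_k(G)^{-2}=\sum_i d_i^{-2}$ for the rows. Both routes give the same bound $m\cdot C(\sqrt m\,t)^{d_\F}$.
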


\begin{proof}
Let $R_1,\ldots,R_m$ be the rows of $G$, let $H_i=\operatorname{span}_\F\{R_j:j\ne i\}$, and set
$d_i=\dist(R_i,H_i)$. Almost surely $G$ is invertible. The negative second moment identity gives
\begin{equation}\label{eq:nsm}
  \sum_{k=1}^m \sigma_k(G)^{-2}=\sum_{i=1}^m d_i^{-2}.
\end{equation}
For completeness, if $y_i=G^{-1}e_i$, then $y_i\perp H_i$ and $\langle R_i,y_i\rangle=1$, hence
$\|y_i\|=d_i^{-1}$. Summing $\|y_i\|^2=e_i^*(GG^*)^{-1}e_i$ over $i$ proves \eqref{eq:nsm}.

If $\sigma_{\min}(G)\le t$, then the left side of \eqref{eq:nsm} is at least $t^{-2}$, so some $i$
satisfies $d_i\le \sqrt m\,t$. Conditional on the other rows, $d_i$ has the same distribution as
$|g_\F|$, where $g_\R\sim N(0,1)$ and $g_\C$ is a standard complex Gaussian; hence
$\PP\{|g_\R|\le u\}\le \sqrt{2/\pi}\,u$ and $\PP\{|g_\C|\le u\}=1-e^{-u^2}\le u^2$. A union bound over
$i$ gives \eqref{eq:lsv-tail}.
\end{proof}

We also use the following elementary asymptotic form of Stirling's formula.

\begin{lemma}[Binomial scale]\label{lem:binomial}
Let $N_m/m\to\gamma>1$. Then $\frac1m\log \binom{N_m}{m}\to h(\gamma)$, with $h$ as in
\eqref{eq:hgamma}. If $N_m=\gamma m+O(1)$, then
$\binom{N_m-1}{m-1}=\Theta_\gamma(m^{-1/2}e^{h(\gamma)m})$.
\end{lemma}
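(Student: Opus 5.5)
The plan is to reduce both claims to Stirling's formula with explicit error control, $\log k! = k\log k - k + \tfrac12\log(2\pi k) + O(1/k)$ for $k\ge1$, applied to
\[
\binom{N}{m}=\frac{N!}{m!\,(N-m)!}.
\]
Because $\gamma>1$, both $N_m$ and $N_m-m$ grow linearly in $m$, so for large $m$ all three factorials have arguments tending to infinity and Stirling applies to each. Writing $N=N_m$ and $k=N-m$, the linear terms $-N+m+k$ cancel, and we obtain
\[
\log\binom{N}{m} = N\log N - m\log m - k\log k + \tfrac12\log\frac{N}{2\pi m k} + O(1/m).
\]
Dividing by $m$ and setting $\gamma_m=N/m$, the main term equals
\[
\gamma_m\log\gamma_m-(\gamma_m-1)\log(\gamma_m-1)=h(\gamma_m),
\]
since the $\log m$ contributions cancel: $N+(-m)+(-k)=0$ as coefficients of $\log m$. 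The remaining term is $O((\log m)/m)$. Since $h$ is continuous on $(1,\infty)$ and $\gamma_m\to\gamma$, the first claim $\frac1m\log\binom{N_m}{m}\to h(\gamma)$ follows.

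For the second claim, first use the identity $\binom{N-1}{m-1}=\frac{m}{N}\binom{N}{m}$. The prefactor $m/N$ tends to $1/\gamma$ and so is $\Theta_\gamma(1)$, which means it suffices to show $\binom{N}{m}=\Theta_\gamma(m^{-1/2}e^{h(\gamma)m})$. The square-root term is
\[
\tfrac12\log\frac{N}{2\pi mk} = -\tfrac12\log m + O_\gamma(1),
\]
because $N/m\to\gamma$ and $k/m\to\gamma-1>0$. The key step is to compare $m\,h(\gamma_m)$ with $m\,h(\gamma)$. Here $\gamma_m-\gamma=O(1/m)$, and $h$ is $C^1$ near $\gamma$ with $h'(x)=\log x-\log(x-1)$, which is bounded on a neighborhood of $\gamma$. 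By the mean value theorem,
\[
m\,|h(\gamma_m)-h(\gamma)| \le m\cdot O_\gamma(1)\cdot O(1/m)=O_\gamma(1).
\]
Combining these gives $\log\binom{N}{m}=h(\gamma)m-\tfrac12\log m+O_\gamma(1)$, which is the claimed $\Theta$ bound for all sufficiently large $m$.

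The only point needing care is the claim that the bound holds for all $m$. For the finitely many small $m$ where Stirling's error is not yet controlled, or where $N_m-m$ could be small or zero, both sides are positive finite quantities, so enlarging the implicit constants absorbs them. Beyond this, the main obstacle is the bookkeeping in the mean value step: one must use the $O(1)$ hypothesis $N_m=\gamma m+O(1)$ rather than merely $N_m/m\to\gamma$, because otherwise $m(h(\gamma_m)-h(\gamma))$ need not be bounded and the polynomial prefactor $m^{-1/2}$ could not be isolated.
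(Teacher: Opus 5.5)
Your proposal is correct and matches the paper's approach: the paper gives no written proof and calls the lemma an elementary consequence of Stirling's formula, which is what you carry out, with $\binom{N-1}{m-1}=\frac{m}{N}\binom{N}{m}$ and the mean value bound $m|h(\gamma_m)-h(\gamma)|=O_\gamma(1)$ supplying the details. One small caveat: absorbing finitely many small $m$ into the constants presumes $N_m\ge m$ there (otherwise the binomial coefficient vanishes), but the $\Theta_\gamma$ bound is only used for large $m$, so this is harmless.
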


\section{Lower bound: no square submatrix is much smaller}

\begin{proposition}[Lower estimate]\label{prop:lower}
Under the assumptions of Theorem \ref{thm:general}, for every fixed $\varepsilon>0$,
\begin{equation}\label{eq:lower-prob}
  \PP\left\{M_m^\F(A_m)\le \exp\left[-\left(\frac{h(\gamma)}{d_\F}+\varepsilon\right)m\right]\right\}\to0.
\end{equation}
If $N_m=\gamma m+O(1)$, the probability in \eqref{eq:lower-prob} is bounded by
$C_{\gamma,\varepsilon,\F}m^{1+d_\F/2}e^{-d_\F\varepsilon m}$ for all large $m$.
\end{proposition}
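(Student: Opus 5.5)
This is a first-moment (union) bound over all square row submatrices, using Lemma \ref{lem:lsv-tail} for each one and Lemma \ref{lem:binomial} to count them. Put $t_m=\exp[-(h(\gamma)/d_\F+\varepsilon)m]$. For each fixed $T\subset[N_m]$ with $|T|=m$, the submatrix $A_{m,T}$ is an $m\times m$ matrix with independent standard Gaussian entries over $\F$. Lemma \ref{lem:lsv-tail} therefore gives
\[
\PP\{\sigma_{\min}(A_{m,T})\le t_m\}\le C_\F m^{1+d_\F/2}t_m^{d_\F}
= C_\F m^{1+d_\F/2}e^{-h(\gamma)m-d_\F\varepsilon m}.
\]
The dependence between different $T$ plays no role, because we only need a union bound:
\[
\PP\{M_m^\F(A_m)\le t_m\}\le \binom{N_m}{m}\,C_\F m^{1+d_\F/2}e^{-h(\gamma)m-d_\F\varepsilon m}.
\]

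For the general statement \eqref{eq:lower-prob}, Lemma \ref{lem:binomial} gives $\binom{N_m}{m}=e^{h(\gamma)m+o(m)}$. The right-hand side is then $m^{1+d_\F/2}e^{-d_\F\varepsilon m+o(m)}$, which tends to $0$. Here the $o(m)$ is absorbed by, say, half of $d_\F\varepsilon m$ for large $m$.

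For the quantitative statement with $N_m=\gamma m+O(1)$, we need $\binom{N_m}{m}\le C_\gamma e^{h(\gamma)m}$, with no subexponential loss. This is the only step that needs any care, since Lemma \ref{lem:binomial} is stated for $\binom{N_m-1}{m-1}$. Write
\[
\binom{N_m}{m}=\frac{N_m}{m}\binom{N_m-1}{m-1}.
\]
Since $N_m/m\to\gamma$, the prefactor is bounded by, say, $2\gamma$. Lemma \ref{lem:binomial} then gives
\[
\binom{N_m}{m}\le C'_\gamma m^{-1/2}e^{h(\gamma)m}\le C'_\gamma e^{h(\gamma)m}.
\]
Inserting this into the union bound yields
\[
\PP\{M_m^\F(A_m)\le t_m\}\le C_{\gamma,\varepsilon,\F}\,m^{1+d_\F/2}e^{-d_\F\varepsilon m}
\]
for all large $m$, as claimed; in fact the factor $m^{-1/2}$ even improves the polynomial power slightly. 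One point to check is that $\gamma$ may be irrational, so $N_m\ne\gamma m$ exactly. Lemma \ref{lem:binomial}'s $\Theta_\gamma$ statement is formulated under exactly the hypothesis $N_m=\gamma m+O(1)$, however, so we can invoke it directly.
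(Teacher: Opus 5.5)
Your proposal is correct and is the paper's argument: a union bound over the $\binom{N_m}{m}$ square blocks, with Lemma \ref{lem:lsv-tail} bounding each block's tail and Lemma \ref{lem:binomial} counting the blocks. Your identity $\binom{N_m}{m}=\frac{N_m}{m}\binom{N_m-1}{m-1}$ fills in the step the paper leaves implicit in the quantitative case, and it even gives the slightly better factor $m^{1/2+d_\F/2}$.
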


\begin{proof}
Let $t>0$. By a union bound and Lemma \ref{lem:lsv-tail},
$\PP\{M_m^\F(A_m)\le t\}\le \binom{N_m}{m} C_\F m^{1+d_\F/2}t^{d_\F}$. Take
$t=\exp[-(h(\gamma)/d_\F+\varepsilon)m]$. Since $m^{-1}\log \binom{N_m}{m}\to h(\gamma)$, the bound
tends to zero. The stated quantitative bound follows from Lemma \ref{lem:binomial} when
$N_m=\gamma m+O(1)$.
\end{proof}

\section{Upper bound: one row and many random hyperplanes}

The matching upper estimate is the main point. We fix one row and compare it to all hyperplanes
generated by $m-1$ of the remaining rows. Let $A_m$ have rows $a_1,\ldots,a_N$, $N=N_m$. Put
$x=a_N$ and $b_i=a_i$ for $1\le i\le N-1$, and define
\[
  \cS=\{S\subset[N-1]: |S|=m-1\},\qquad L=|\cS|=\binom{N-1}{m-1}.
\]
For $S\in\cS$, let $H_S=\operatorname{span}_\F\{b_i:i\in S\}$, a.s.\ a hyperplane in $\F^m$. Choose a
unit normal $u_S$ to $H_S$ (over $\C$ determined up to a phase; all events below are phase-invariant).
For $t>0$ set
\begin{equation}\label{eq:Z-def}
  X_S=\mathbf1_{\{|\langle x,u_S\rangle|\le t\}},
  \qquad Z_t=\sum_{S\in\cS}X_S.
\end{equation}
If $Z_t>0$, then for some $S$ the row $x$ lies within distance $t$ of $H_S$; for $T=S\cup\{N\}$ and a
unit vector $v\perp H_S$, $\|A_{m,T}v\|=|\langle x,v\rangle|=\dist(x,H_S)\le t$. Thus
\begin{equation}\label{eq:Z-implies}
  Z_t>0\quad\Longrightarrow\quad M_m^\F(A_m)\le t.
\end{equation}

\subsection{Pair geometry}

\begin{lemma}[Normals for two overlapping row sets]\label{lem:normals}
Fix distinct $S,T\in\cS$. Let $r=|S\cap T|$ and $D=m-r$. Then $D\ge2$. Conditional on the rows indexed
by $S\cap T$, the two normal lines $\F u_S$ and $\F u_T$ are independent Haar-distributed lines in the
$D$-dimensional space $E^\perp$, $E=\operatorname{span}_\F\{b_i:i\in S\cap T\}$. Equivalently, for all
phase-invariant events one may take $u_S,u_T$ to be independent Haar unit vectors in $E^\perp$.
\end{lemma}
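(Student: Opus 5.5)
The plan is to exploit rotation invariance of the Gaussian rows together with the independence of disjoint families of rows. First, $D\ge2$: since $S\ne T$ and $|S|=|T|=m-1$, we have $r=|S\cap T|\le m-2$, so $D=m-r\ge2$. Write $I=S\cap T$, $S'=S\setminus I$ and $T'=T\setminus I$. Then $|S'|=|T'|=m-1-r=D-1\ge1$, and $S'$, $T'$, $I$ are pairwise disjoint. Almost surely the rows $\{b_i:i\in I\}$ are linearly independent, so $E$ has dimension $r$ and $E^\perp$ has dimension $D$.

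Condition on $\{b_i:i\in I\}$, which fixes $E$, and let $P$ denote the orthogonal projection onto $E^\perp$. Since $H_S=E+\operatorname{span}_\F\{b_i:i\in S'\}$, the normal $u_S$ lies in $E^\perp$ and is orthogonal to $Pb_i$ for every $i\in S'$. Thus $\F u_S$ is the orthogonal complement, inside $E^\perp$, of $\operatorname{span}_\F\{Pb_i:i\in S'\}$, which is almost surely $(D-1)$-dimensional. In the same way, $\F u_T$ is a measurable function of $\{Pb_i:i\in T'\}$. The rows indexed by $S'$ and $T'$ are independent of each other and of the conditioning rows. Hence, conditionally, the two lines are independent, because they are functions of disjoint independent families.

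It remains to identify the law of each line. The vectors $Pb_i$ are independent standard Gaussian vectors in $E^\perp\cong\F^D$, since the projection of a standard Gaussian onto a fixed subspace is standard Gaussian there. Their law is therefore invariant under every unitary (orthogonal when $\F=\R$) map $U$ of $E^\perp$. The line $\ell(\{Pb_i\})$, the orthocomplement of their span, satisfies $\ell(\{UPb_i\})=U\ell(\{Pb_i\})$. So the law of the line is a probability measure on the projective space of $E^\perp$ that is invariant under the full unitary group. That group acts transitively, so by uniqueness of the invariant measure the law is Haar.

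For the final sentence of the statement, the only point needing care is the phase ambiguity over $\C$. A phase-invariant event depends on $u_S$ only through the line $\F u_S$. Pick $u_S$ by multiplying any measurable unit choice by an independent uniform phase (or sign over $\R$). This turns the Haar line into a Haar unit vector on the sphere of $E^\perp$ without changing any phase-invariant probability, and it preserves independence. No step is a real obstacle; the main thing to verify is the equivariance and measurability of the map from $\{Pb_i:i\in S'\}$ to the normal line, which is routine.
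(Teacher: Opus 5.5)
Your proposal is correct and follows essentially the same route as the paper: condition on the rows in $S\cap T$, project the rows of $S\setminus T$ and $T\setminus S$ onto $E^\perp$ to obtain two independent families of $D-1$ standard Gaussian vectors, and use rotational invariance to identify each normal line as Haar-distributed. You also spell out details the paper leaves implicit, namely why $D\ge2$, why $\F u_S$ is the orthocomplement in $E^\perp$ of the projected span, and the phase randomization; all of these are fine.
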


\begin{proof}
Almost surely $\dim_\F E=r$, so $\dim_\F E^\perp=D$. The sets $S\setminus T$ and $T\setminus S$ each
contain $D-1$ rows, and the corresponding row families are independent conditional on the rows in
$S\cap T$. Projecting the rows in $S\setminus T$ to $E^\perp$ gives $D-1$ independent standard Gaussian
vectors in $E^\perp$; their span is a hyperplane, and by rotational invariance its normal line is
Haar-distributed. The same argument applies to $T\setminus S$, independently.
\end{proof}

For $D\ge2$ define, with $u,v$ independent Haar unit vectors in $\F^D$ and $g$ an independent standard
Gaussian vector in $\F^D$,
\begin{equation}\label{eq:qD}
  q_D^\F(t)=\PP\{|\langle g,u\rangle|\le t,\ |\langle g,v\rangle|\le t\},
  \qquad
  p_\F(t)=\PP\{|g_\F|\le t\},
\end{equation}
where $g_\F$ is a standard scalar Gaussian over $\F$.

\begin{lemma}[Two-slab and two-disk estimate]\label{lem:two-slab}
For each $\F\in\{\R,\C\}$ there is a constant $C_\F$ such that, for $0<t\le1$,
\begin{equation}\label{eq:two-slab-main}
  q_D^\F(t)\le (1+C_\F t^2)p_\F(t)^2\left(1+\frac{C_\F}{D}\right),\qquad D\ge3,
\end{equation}
and $q_2^\F(t)\le p_\F(t)$.
\end{lemma}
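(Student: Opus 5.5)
We first reduce to a statement about a two-dimensional projection. By rotational invariance of $g$, condition on $u,v$ and note that $(\langle g,u\rangle,\langle g,v\rangle)$ is a centered Gaussian vector in $\F^2$ with covariance
\[
\begin{pmatrix}1&\rho\\ \bar\rho&1\end{pmatrix},\qquad \rho=\langle v,u\rangle .
\]
Hence $q_D^\F(t)=\EE\,\phi_t(\rho)$, where $\phi_t(\rho)$ is the probability that a standard correlated pair with correlation $\rho$ lands in the product of two $t$-disks (real intervals when $\F=\R$). Since $u,v$ are independent Haar, $|\rho|^2$ has the law of the squared first coordinate of a uniform unit vector in $\F^D$. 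That is $\mathrm{Beta}(1/2,(D-1)/2)$ over $\R$ and $\mathrm{Beta}(1,D-1)$ over $\C$.

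The case $D=2$ is trivial: $q_2^\F(t)\le \PP\{|\langle g,u\rangle|\le t\}=p_\F(t)$, because $\langle g,u\rangle$ is a standard scalar Gaussian.

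For $D\ge3$ we use the bivariate density. The joint density of the pair is
\[
f_\rho(z,w)=\frac{1}{(\pi_\F(1-|\rho|^2))^{d_\F}}\exp\!\left(-\frac{|z|^2+|w|^2-2\Re(\bar\rho z\bar w)}{c_\F(1-|\rho|^2)}\right),
\]
with the appropriate normalizing constants. On the region $|z|,|w|\le t\le1$, the exponent differs from $-(|z|^2+|w|^2)/c_\F$ only by $O(t^2)$ uniformly in $\rho$, since $|z|^2+|w|^2\le 2t^2$ and $1/(1-|\rho|^2)\ge1$ only helps when bounded via $\exp(\cdot)\le1$ after comparison. More carefully, $f_\rho(z,w)\le (1-|\rho|^2)^{-d_\F}\,f_0(z,w)\,e^{Ct^2}$, where $f_0$ is the product density. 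This comparison holds because $f_0\ge e^{-Ct^2}\times\mathrm{const}$ on the region, while $f_\rho\le \mathrm{const}\,(1-|\rho|^2)^{-d_\F}$ with the same constant. Integrating over the box gives
\[
\phi_t(\rho)\le (1+C t^2)\,p_\F(t)^2\,(1-|\rho|^2)^{-d_\F/2},
\]
where the real case carries the exponent $1/2$ from the $\sqrt{\det}$. Over $\C$ the determinant factor is $(1-|\rho|^2)^{-1}$, which is exactly $(1-|\rho|^2)^{-d_\F/2}$ with $d_\C=2$.

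It therefore suffices to show that $\EE(1-|\rho|^2)^{-d_\F/2}\le 1+C/D$ for $D\ge3$. This is a Beta integral.

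Over $\R$, with $|\rho|^2\sim\mathrm{Beta}(1/2,(D-1)/2)$,
\[
\EE(1-|\rho|^2)^{-1/2}=\frac{B(1/2,(D-2)/2)}{B(1/2,(D-1)/2)}=\frac{\Gamma((D-2)/2)\,\Gamma(D/2)}{\Gamma((D-1)/2)^2}.
\]
This is finite for $D\ge3$ and equals $1+O(1/D)$ by the log-convexity and Stirling asymptotics of $\Gamma$.

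Over $\C$, with $|\rho|^2\sim\mathrm{Beta}(1,D-1)$,
\[
\EE(1-|\rho|^2)^{-1}=\frac{D-1}{D-2}=1+\frac{1}{D-2}\le 1+\frac{3}{D}
\]
for $D\ge3$.

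Combining the Beta bound with the density comparison gives \eqref{eq:two-slab-main}.

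The main obstacle is the density comparison step. It must hold uniformly in $\rho$ near $1$, where the covariance degenerates. The approach is to avoid any lower bound on $1-|\rho|^2$: we bound the exponential factor of $f_\rho$ by $1$, using positive definiteness of the quadratic form, and the exponential of $f_0$ from below by $e^{-2t^2/c_\F}$. Near $|\rho|=1$ this is lossy, but the loss is absorbed into the integrable Beta moment. The integrability of that moment is exactly what requires $D\ge3$, which is why $D=2$ is handled separately.
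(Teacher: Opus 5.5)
Your proof is correct and follows the paper's argument. You condition on $u,v$, bound the bivariate Gaussian density on the box/polydisk by its value at the origin, which is proportional to $(1-|\rho|^2)^{-d_\F/2}$, and average using the Beta moments $\Gamma(D/2)\Gamma((D-2)/2)/\Gamma((D-1)/2)^2$ and $(D-1)/(D-2)$. The only cosmetic difference is that you compare pointwise with the product density $f_0$, losing a factor $e^{Ct^2}$, where the paper uses Taylor expansions of $p_\F(t)$.

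Two slips need fixing:
\begin{itemize}
\item In your displayed $f_\rho$ and in the inequality $f_\rho\le(1-|\rho|^2)^{-d_\F}f_0e^{Ct^2}$, the normalizer must be $(1-|\rho|^2)^{-d_\F/2}$, as you in fact use afterwards. With the exponent $-d_\F$ the Beta moment diverges at $D=3$.
\item The exponent of $f_\rho$ is not within $O(t^2)$ of that of $f_0$ uniformly in $\rho$. Only the one-sided bound $\exp(\cdot)\le 1$, which you then fall back on, is valid.
\end{itemize}
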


\begin{proof}
The case $D=2$ is immediate, since the joint event is contained in $\{|\langle g,u\rangle|\le t\}$.
Assume $D\ge3$. For $\F=\R$, conditional on $u,v$ write $\rho=\langle u,v\rangle$ and
$s=(1-\rho^2)^{1/2}$; the pair $(\langle g,u\rangle,\langle g,v\rangle)$ is a centered real Gaussian
vector with covariance determinant $s^2$, density at the origin $(2\pi s)^{-1}$, so
\begin{equation}\label{eq:real-cond}
  \PP\{|\langle g,u\rangle|\le t, |\langle g,v\rangle|\le t\mid u,v\}\le \frac{2t^2}{\pi s}.
\end{equation}
For independent Haar unit vectors in $\R^D$,
$\EE(1-\rho^2)^{-1/2}=\Gamma(D/2)\Gamma((D-2)/2)/\Gamma((D-1)/2)^2\le1+C/D$ by the gamma-ratio
asymptotics. Averaging \eqref{eq:real-cond} and using $p_\R(t)=\sqrt{2/\pi}\,t+O(t^3)$ gives
\eqref{eq:two-slab-main} over $\R$. For $\F=\C$, conditional on $u,v$ write $s^2=1-|\rho|^2$ with
$\rho=\langle u,v\rangle$; the pair is a centered complex Gaussian vector in $\C^2$ with covariance
determinant $s^2$ and density at the origin $(\pi^2 s^2)^{-1}$, so the probability of the product of
the two disks of radius $t$ is at most $t^4/(1-|\rho|^2)$. For independent Haar unit vectors in $\C^D$,
$|\rho|^2$ has the $\mathrm{Beta}(1,D-1)$ distribution, hence
$\EE(1-|\rho|^2)^{-1}=(D-1)/(D-2)\le1+C/D$ for $D\ge3$. Since $p_\C(t)=1-e^{-t^2}=t^2+O(t^4)$, averaging
gives \eqref{eq:two-slab-main} over $\C$.
\end{proof}

\begin{lemma}[Overlap combinatorics]\label{lem:overlap}
Let $S,T$ be independent uniform elements of $\cS=\{S\subset[N-1]: |S|=m-1\}$, and set $D=m-|S\cap T|$.
Then for $d\ge1$,
\begin{equation}\label{eq:D-distribution}
  \PP\{D=d\}=\frac{\binom{m-1}{d-1}\binom{N-m}{d-1}}{\binom{N-1}{m-1}},
\end{equation}
with the convention that binomial coefficients outside their natural range vanish. Moreover
\begin{equation}\label{eq:EDinv}
  \EE\frac1D=\frac{N}{m(N-m+1)},
  \qquad
  \PP\{D=2\}=\frac{(m-1)(N-m)}{\binom{N-1}{m-1}}.
\end{equation}
\end{lemma}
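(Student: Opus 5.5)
Since $S$ and $T$ are independent and uniform on $\cS$, the plan is to fix $S$ by symmetry and count the $T$ with a prescribed overlap. Put $n=N-1$ and $k=m-1$. If $|S\cap T|=r$ then $D=m-r$, so $|S\cap T|=k-(d-1)$. To choose such a $T$ we pick the $k-(d-1)$ common elements from $S$ in $\binom{k}{k-d+1}=\binom{m-1}{d-1}$ ways, and the remaining $d-1$ elements from $[n]\setminus S$, which has $n-k=N-m$ elements, in $\binom{N-m}{d-1}$ ways. Dividing by $L=\binom{N-1}{m-1}$ gives \eqref{eq:D-distribution}. This is the hypergeometric law of $|S\cap T|$. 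The convention that binomial coefficients outside their natural range vanish takes care of the range $1\le d\le\min(m,N-m+1)$. The formula for $\PP\{D=2\}$ is then just the case $d=2$.

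For $\EE(1/D)$ I will use the absorption identity
\[
\frac1d\binom{m-1}{d-1}=\frac1m\binom{m}{d}.
\]
It gives
\[
\EE\frac1D=\frac{1}{mL}\sum_{d\ge1}\binom{m}{d}\binom{N-m}{d-1}.
\]
Next I rewrite $\binom{N-m}{d-1}=\binom{N-m}{N-m-d+1}$ and apply Vandermonde's identity. The sum becomes
\[
\sum_{d}\binom{m}{d}\binom{N-m}{N-m+1-d}=\binom{N}{N-m+1}=\binom{N}{m-1}.
\]
The $d=0$ term of this sum vanishes, because it involves $\binom{N-m}{-1}$, so summing over $d\ge1$ is the same as summing over all $d$. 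Finally,
\[
\binom{N}{m-1}\Big/\binom{N-1}{m-1}=\frac{N}{N-m+1},
\]
which gives $\EE(1/D)=N/(m(N-m+1))$.

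Both steps are routine. The only point that needs care is keeping the indices straight: using $\binom{m-1}{d-1}$ rather than $\binom{m-1}{d}$, and checking the boundary term in the Vandermonde step. As a sanity check I will verify the normalisation $\sum_d\PP\{D=d\}=1$, which is Vandermonde's identity $\sum_j\binom{m-1}{j}\binom{N-m}{j}=\binom{N-1}{m-1}$.
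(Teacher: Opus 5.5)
Your proof is correct and follows the paper's argument essentially line by line. You fix $S$ and count the $T$ that swap out $d-1$ elements to get the hypergeometric law, then evaluate $\EE(1/D)$ via the absorption identity $\frac1d\binom{m-1}{d-1}=\frac1m\binom{m}{d}$, Vandermonde, and the ratio $\binom{N}{m-1}/\binom{N-1}{m-1}=N/(N-m+1)$, exactly as the paper does in the shifted notation $n=N-1$, $k=m-1$, $j=d-1$.
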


\begin{proof}
Fix $S$. To have $D=d$, the set $T$ omits $d-1$ elements of $S$ and includes $d-1$ elements from
$[N-1]\setminus S$ (of size $N-m$); this gives \eqref{eq:D-distribution}. With $n=N-1$ and $k=m-1$,
\[
  \EE\frac1D=\frac1{\binom nk}\sum_{j=0}^k\frac1{j+1}\binom kj\binom{n-k}{j}
  =\frac1{\binom nk}\cdot\frac1{k+1}\sum_{j=0}^k\binom{k+1}{j+1}\binom{n-k}{j}
  =\frac{1}{\binom nk}\cdot\frac{1}{k+1}\binom{n+1}{k},
\]
using $(j+1)^{-1}\binom kj=(k+1)^{-1}\binom{k+1}{j+1}$ and Vandermonde's identity. Hence
$\EE[1/D]=\frac{n+1}{(k+1)(n+1-k)}=\frac{N}{m(N-m+1)}$. The value of $\PP\{D=2\}$ is
\eqref{eq:D-distribution} at $d=2$.
\end{proof}

\subsection{The second moment}

\begin{proposition}[Upper estimate]\label{prop:upper}
Under the assumptions of Theorem \ref{thm:general}, for every fixed $\varepsilon>0$,
\begin{equation}\label{eq:upper-prob}
  \PP\left\{M_m^\F(A_m)>\exp\left[-\left(\frac{h(\gamma)}{d_\F}-\varepsilon\right)m\right]\right\}\to0.
\end{equation}
If $N_m=\gamma m+O(1)$, the probability in \eqref{eq:upper-prob} is $O_{\gamma,\varepsilon,\F}(m^{-1})$.
\end{proposition}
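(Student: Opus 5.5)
Set $t=\exp[-(h(\gamma)/d_\F-\varepsilon)m]$. By \eqref{eq:Z-implies} it suffices to show $\PP\{Z_t=0\}\to0$. The plan is a second moment argument: Paley--Zygmund (or Chebyshev) gives
\[
\PP\{Z_t=0\}\le \frac{\EE Z_t^2-(\EE Z_t)^2}{(\EE Z_t)^2}.
\]

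\textbf{First moment.} Conditional on $\{b_i\}$, the variable $\langle x,u_S\rangle$ is a standard scalar Gaussian over $\F$, since $x$ is independent of the $b_i$ and $u_S$ is a unit vector. Hence $\EE Z_t=L\,p_\F(t)$. Lemma \ref{lem:binomial} gives $L=e^{h(\gamma)m+o(m)}$, and $p_\F(t)\asymp t^{d_\F}=e^{-(h(\gamma)-d_\F\varepsilon)m}$. Therefore $\EE Z_t=e^{d_\F\varepsilon m+o(m)}\to\infty$; when $N_m=\gamma m+O(1)$ this is at least $c\,m^{-1/2}e^{d_\F\varepsilon m}$.

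\textbf{Second moment.} Write $\EE Z_t^2=L^2\,\EE_{S,T}\,\PP\{X_S=X_T=1\}$ with $S,T$ independent uniform in $\cS$, and split according to $D=m-|S\cap T|$.

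\emph{Case $S=T$.} These terms contribute $L\,p_\F(t)=\EE Z_t$.

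\emph{Case $D=2$.} Conditioning on the rows in $S\cap T$ and using Lemma \ref{lem:normals}, the pair $(\langle x,u_S\rangle,\langle x,u_T\rangle)$ equals $(\langle g,u\rangle,\langle g,v\rangle)$ with $g$ the projection of $x$ onto $E^\perp$. This projection is a standard Gaussian in $\F^D$ independent of $u,v$, so the joint probability is $q_D^\F(t)$. Lemma \ref{lem:two-slab} bounds $q_2^\F\le p_\F(t)$, and by \eqref{eq:EDinv} the total contribution is at most
\[
L^2\,\PP\{D=2\}\,p_\F(t)=L(m-1)(N-m)\,p_\F(t)=O(m^2)\,\EE Z_t .
\]

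\emph{Case $D\ge3$.} Lemma \ref{lem:two-slab} gives a contribution of at most
\[
(1+C t^2)\,L^2p_\F(t)^2\bigl(1+C\,\EE[1/D]\bigr).
\]
By \eqref{eq:EDinv}, $\EE[1/D]=N/(m(N-m+1))=O(1/m)$.

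\textbf{Combining.} The variance ratio is then
\[
\frac{\operatorname{Var}Z_t}{(\EE Z_t)^2}\le \frac{1+O(m^2)}{\EE Z_t}+Ct^2+\frac{C'}{m}.
\]
The first term is $O(m^{5/2}e^{-d_\F\varepsilon m})$ and $t^2$ is exponentially small. So the probability is $O(m^{-1})$ when $N_m=\gamma m+O(1)$, and tends to $0$ in general since $N/m\to\gamma>1$ keeps $\EE[1/D]\to0$.

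\textbf{Main obstacle.} The crux is the $D\ge3$ near-independence. The constant must be exactly $1+o(1)$, not merely $O(1)$, for Chebyshev to give a vanishing bound. This is why the $1/D$ correction in Lemma \ref{lem:two-slab} together with the exact computation of $\EE[1/D]$ is essential. The $D=2$ terms are handled crudely, since their combinatorial weight is only polynomial. One should also check that the conditional identification of $g$ with a Gaussian independent of $(u_S,u_T)$ holds: $x$ is independent of all the $b_i$, so this is fine.
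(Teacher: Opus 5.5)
Your proposal is correct and follows the paper's proof essentially step for step. It uses the same counting variable $Z_t$ and splits the second moment the same way into $S=T$, $D=2$ (the crude bound $q_2^\F\le p_\F(t)$ against the polynomial count $(m-1)(N-m)$) and $D\ge3$ (near-independence from Lemma~\ref{lem:two-slab} together with the exact value of $\EE[1/D]$), before closing with Chebyshev. The differences are minor. You treat a general sequence $N_m/m\to\gamma$ directly via $\frac1m\log L\to h(\gamma)$ instead of substituting $h(N_m/m)$, which is fine. You should also state explicitly that one may take $\varepsilon<h(\gamma)/d_\F$ (the event shrinks as $\varepsilon$ grows), so that $t\le1$, as Lemma~\ref{lem:two-slab} and the bound $p_\F(t)\ge c_\F t^{d_\F}$ require.
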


\begin{proof}
Take $0<\varepsilon<h(\gamma)/d_\F$. First assume $N_m=\gamma m+O(1)$, and write $h=h(\gamma)$,
$d=d_\F$, $t=\exp[-(h/d-\varepsilon)m]\in(0,1]$ for large $m$. With $Z_t$ as in \eqref{eq:Z-def} and
$L=\binom{N-1}{m-1}$, $\mu:=\EE Z_t=L\,p_\F(t)$. Since $p_\F(t)\ge c_\F t^d$ for $0<t\le1$ and
$L\ge c_\gamma m^{-1/2}e^{hm}$ (Lemma \ref{lem:binomial}),
\begin{equation}\label{eq:mu-lower}
  \mu\ge c_{\gamma,\F}\,m^{-1/2}e^{d\varepsilon m}\to\infty.
\end{equation}
For $S\ne T$, Lemma \ref{lem:normals} and the independence of $x$ from all $b_i$ give
$\EE[X_SX_T]=q_{D(S,T)}^\F(t)$ with $D(S,T)=m-|S\cap T|$ (projecting $x$ to $E^\perp$ makes it a
standard Gaussian vector there, independent of $u_S,u_T$). Hence, for $S,T$ independent uniform in
$\cS$ and $D=m-|S\cap T|$,
\begin{equation}\label{eq:second-moment}
  \frac{\EE Z_t^2}{\mu^2}
  \le \frac{1}{L\,p_\F(t)}+
       \EE\left[\frac{q_D^\F(t)}{p_\F(t)^2}\mathbf1_{\{D\ge2\}}\right].
\end{equation}
By Lemmas \ref{lem:two-slab} and \ref{lem:overlap}, the $D\ge3$ part is at most
$(1+C_\F t^2)\bigl(1+C_\F\,\EE[1/D]\bigr)\le 1+C_\F t^2+C_{\gamma,\F}/m$ (using
$\EE[1/D]=O(1/m)$), and the $D=2$ part is at most
$\PP\{D=2\}/p_\F(t)\le C_{\gamma,\F}m^{5/2}e^{-d\varepsilon m}$. With \eqref{eq:mu-lower},
\[
  \frac{\operatorname{Var} Z_t}{\mu^2}=\frac{\EE Z_t^2}{\mu^2}-1
  \le \frac{C_{\gamma,\F}}{m}+C_{\gamma,\varepsilon,\F}m^{5/2}e^{-d\varepsilon m}
  =O_{\gamma,\varepsilon,\F}(m^{-1}),
\]
so by Chebyshev $\PP\{Z_t=0\}\le \operatorname{Var}(Z_t)/\mu^2=O(m^{-1})$. With \eqref{eq:Z-implies}
this proves \eqref{eq:upper-prob} and its rate when $N_m=\gamma m+O(1)$. For a general sequence
$N_m/m\to\gamma$, repeat the proof with $h(N_m/m)$ in place of $h(\gamma)$ and use
$h(N_m/m)\to h(\gamma)$.
\end{proof}

\section{Proofs of the main theorem and the phase retrieval corollary}

\begin{proof}[Proof of Theorem \ref{thm:general}]
Propositions \ref{prop:lower} and \ref{prop:upper} give, for every $\varepsilon>0$, that
$\frac1m\log M_m^\F(A_m)$ is below $-h(\gamma)/d_\F-\varepsilon$ and above $-h(\gamma)/d_\F+\varepsilon$
each with probability $\to0$; this is convergence in probability. When $N_m=\gamma m+O(1)$ the lower
probability is exponentially small and the upper probability is $O(m^{-1})$, giving \eqref{eq:rate}.
\end{proof}

\begin{proof}[Proof of Corollary \ref{cor:bw}]
For Gaussian $A_m\in\R^{(2m-1)\times m}$, every $m$ rows are independent almost surely, so the
reduction \eqref{eq:omega-square} holds a.s. Theorem \ref{thm:general} with $\F=\R$, $N_m=2m-1$ and
$h(2)=\log4$ gives \eqref{eq:omega-limit}--\eqref{eq:omega-rate}. For the base in \eqref{eq:bw-bound},
note $\frac1m\log R_m\xrightarrow{P}0$: the upper tail follows from a union bound and
$\PP\{\|g\|\ge e^{\eta m}\}\le \exp(-c e^{2\eta m})$ for fixed $\eta>0$, and $R_m\ge1$ with high
probability. If $b>1/4$, pick $\varepsilon$ with $e^{-(\log4-\varepsilon)}<b$; then w.h.p.\
$\omega(A_m)\le e^{-(\log4-\varepsilon)m}\le b^m\le R_m b^m$. If $b<1/4$ and $C<\infty$, pick
$\varepsilon,\eta$ with $e^\eta b<e^{-(\log4+\varepsilon)}$; then w.h.p.\
$\omega(A_m)\ge e^{-(\log4+\varepsilon)m}$ and $R_m\le e^{\eta m}$, so $CR_m b^m\le C(e^\eta b)^m
<e^{-(\log4+\varepsilon)m}$ for large $m$, whence $\PP\{\omega(A_m)\le CR_m b^m\}\to0$.
\end{proof}

\begin{remark}[What drives the constants]
The theorem separates the two inputs that determine the exponential base. The combinatorial input is
the entropy $\binom{N_m}{m}=\exp\{h(\gamma)m+o(m)\}$. The small-ball input is the real dimension of the
scalar field: $\PP\{|g_\R|\le t\}\asymp t$ while $\PP\{|g_\C|\le t\}\asymp t^2$. Thus the same binomial
entropy is divided by $d_\F=1$ over $\R$ and by $d_\F=2$ over $\C$.
\end{remark}

\section{Universality and the necessity of continuity}\label{sec:universal}

The lower estimate is insensitive to the entry law, provided it has a bounded density.

\begin{proposition}[Universal lower estimate]\label{prop:universal}
Let $A_m\in\R^{N_m\times m}$ have i.i.d.\ entries with density bounded by $K$, and $N_m/m\to\gamma>1$.
Then for every fixed $\varepsilon>0$, $\PP\{M_m^\R(A_m)\le e^{-(h(\gamma)+\varepsilon)m}\}\to0$.
\end{proposition}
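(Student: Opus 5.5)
The argument follows Proposition \ref{prop:lower} exactly: a union bound over the $\binom{N_m}{m}$ square row submatrices, combined with a one-matrix small-ball estimate. The only thing to redo is Lemma \ref{lem:lsv-tail}, with Gaussian rotational invariance replaced by the bounded density $K$. Concretely, I will show that for an $m\times m$ matrix $G$ with i.i.d.\ entries of density at most $K$,
\[
  \PP\{\sigma_{\min}(G)\le t\}\le 2K m^{3/2}\,t \qquad (t>0).
\]
Given this, $\PP\{M_m^\R(A_m)\le t\}\le \binom{N_m}{m}\,2Km^{3/2}t$. Taking $t=e^{-(h(\gamma)+\varepsilon)m}$ and invoking Lemma \ref{lem:binomial}, this bound is $\exp\{(h(\gamma)+o(1))m - (h(\gamma)+\varepsilon)m + O(\log m)\}\to0$.

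For the one-matrix bound, the negative second moment identity \eqref{eq:nsm} is purely deterministic, valid for any invertible $G$. First note that $G$ is invertible almost surely, because $\det G$ is a nonzero polynomial in the entries and the entries have a joint density. So, as in Lemma \ref{lem:lsv-tail}, $\sigma_{\min}(G)\le t$ forces $d_i=\dist(R_i,H_i)\le\sqrt m\,t$ for some $i$. Now condition on the rows $R_j$, $j\ne i$. Then $H_i$ is almost surely a fixed hyperplane with unit normal $u$, and $d_i=|\langle R_i,u\rangle|$, where $R_i$ is independent of $u$ and has i.i.d.\ coordinates with density at most $K$.

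The key step is the anti-concentration estimate: for a fixed unit vector $u$, the sum $\sum_k u_k\xi_k$ has density at most $\sqrt2\,K$ up to an absolute constant. This follows from the bounded-density projection theorem (Rudelson--Vershynin) for one-dimensional marginals. A weaker bound with a loss polynomial in $m$ also suffices, and it is elementary. Choose a coordinate $k$ with $|u_k|\ge m^{-1/2}$. Condition on all $\xi_j$ with $j\ne k$. The remaining variable $u_k\xi_k$ has density at most $K/|u_k|\le K\sqrt m$, and convolving with independent variables cannot increase the sup of the density. Hence $\PP\{|\langle R_i,u\rangle|\le s\}\le 2K\sqrt m\, s$. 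Setting $s=\sqrt m\,t$ and taking a union bound over $i\in[m]$ gives $\PP\{\sigma_{\min}(G)\le t\}\le 2Km^{2}t$, which is still a polynomial loss. That is harmless at the exponential scale, so I would use this elementary version rather than the sharp constant.

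I do not expect any serious obstacle. The only point requiring care is the anti-concentration step: the Gaussian argument relied on rotational invariance, and that is unavailable here. The largest-coordinate conditioning trick handles it, since the $\sqrt m$ loss is polynomial in $m$. Finally, the proposition asks only for convergence to zero, not a rate, so the general assumption $N_m/m\to\gamma$ is enough via the first part of Lemma \ref{lem:binomial}.
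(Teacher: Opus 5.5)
Your proposal is correct and follows the paper's proof essentially step for step: the negative second moment identity, the largest coordinate $|u_k|\ge m^{-1/2}$ of the normal giving density at most $K\sqrt m$ and hence $\PP\{\sigma_{\min}(G)\le t\}\le 2Km^2t$, then a union bound over the $\binom{N_m}{m}$ blocks. One small slip: your opening bound $2Km^{3/2}t$ would need the projection density to be at most $K$ rather than $\sqrt2\,K$, but this is immaterial because you end up using the elementary $2Km^2t$ bound.
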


\begin{proof}
The identity \eqref{eq:nsm} is purely algebraic. For $G\in\R^{m\times m}$ with rows $R_i$ and $n_i$ the
unit normal to $\operatorname{span}\{R_j:j\ne i\}$, the variable $\langle R_i,n_i\rangle=\sum_j (R_i)_j
(n_i)_j$ is, conditionally on the other rows, a sum of independent terms; the term with
$|(n_i)_{j^\ast}|=\max_j|(n_i)_j|\ge1/\sqrt m$ has density at most $K/|(n_i)_{j^\ast}|\le K\sqrt m$, so
the convolution has density at most $K\sqrt m$ and $\PP\{d_i\le s\}\le 2K\sqrt m\,s$. Hence
$\PP\{\sigma_{\min}(G)\le t\}\le 2Km^2 t$, and a union bound over $\binom{N_m}{m}=e^{h(\gamma)m+o(m)}$
blocks tends to $0$ at $t=e^{-(h(\gamma)+\varepsilon)m}$.
\end{proof}

\begin{remark}[Continuity is necessary]\label{rem:bernoulli}
For atomic entries the lower estimate fails. If the entries are $\pm1$ Bernoulli and $N=2m-1$, every
$m\times m$ submatrix is singular with probability at least $\PP\{R_1=R_2\}=2^{-m}$, so the expected
number of singular $m\times m$ submatrices is at least $\binom{2m-1}{m}2^{-m}\ge c\,2^m/\sqrt m\to\infty$.
Thus the first-moment heuristic behind Proposition~\ref{prop:universal} breaks down, and a
bounded-density assumption (in particular Gaussianity) cannot be removed.
\end{remark}

\begin{remark}[Towards a universal upper estimate]\label{rem:univ-upper}
The upper estimate is expected to persist for i.i.d.\ subgaussian entries with bounded density. The
only use of Gaussianity is Lemma \ref{lem:normals}, where rotational invariance makes the hyperplane
normals Haar-distributed. For bounded-density entries the normals are delocalized with high
probability, so a density-level local central limit theorem yields $\PP\{|\langle x,u_S\rangle|\le
t\}=\Theta(t)$ together with the matching pair estimate; making this uniform over the $\binom{N-1}{m-1}$
hyperplanes requires a delocalization bound we do not pursue here. A Berry--Esseen estimate does not
suffice at the scale $t=e^{-\Theta(m)}$, since its $O(m^{-1/2})$ error dwarfs the signal; a
density-level statement is needed.
\end{remark}


\section*{Funding}
The research presented in this paper was supported by the European Research Council (ERC) under the
European Union's Horizon 2022 research and innovation programme (grant agreement No.~101041711), by the
Simons Foundation, by Heights Labs, by the Israel Science Foundation (grant number 2258/19) and by the Israel Science
Foundation (ISF Grant 4101/25).

\section*{Declaration of generative AI and AI-assisted technologies in the manuscript preparation process}
During the preparation of this work the author used generative AI tools (large language model
assistants) to assist with drafting, revision and bibliographic organization. After using these tools,
the author reviewed and edited the content as needed and takes full responsibility for the content of
the manuscript.

\section*{Declaration of competing interest}
The author declares no competing interests.

\end{document}